\theoremstyle{definition} 
\newtheorem{theorem}{Theorem}
\newtheorem{proposition}[theorem]{Proposition}
\newtheorem{remark}[theorem]{Remark}
\title{A Classification of the Isomorphism Types of Indecomposable and Simple Modules that Refines the Green Theory in Finite Group Modular Representation Theory}
\author{Morton E. Harris}
\address{Department of Mathematics, Statistics, and Computer Science (M/C 249), University of Illinois at Chicago, 851 South Morgan Street, Chicago, IL, 60607-7045 U.S.A.}
\email{meharris@uic.edu}
\subjclass[2010]{20C20}
\keywords{Field Extensions, Absolutely Indecomposable and Simple Modules, Finite Group Modular Representation Theory}
\begin{document}

\begin{abstract}
In Finite Group Modular Representation Theory, the basic objects are the indecomposable and simple modules.
This paper offers a new classification of these objects that refines the Green Theory Classification of indecomposable and simple modules in which the coefficient ring is an algebraic closure of a field of prime $p$ elements.
Note that then the indecomposable (resp. simple) modules are absolutely indecomposable (resp. absolutely simple).
The sets of isomorphism tyes of these modules is decomposed into disjoint, non-empty subsets such that any two elements in any subset share Green Theory Invariants.
We also prove a new formula for the number of isomorphism types of absolutely simple modules of finite groups in prime characteristic.
\end{abstract}
	
	\maketitle

Let $G$ be a finite group, let $F$ be a field of prime $p$ elements and let $\bar{F}$ be an algebraic closure of $F$.
In this article we show that the set of representatives of the isomorphism types of finitely generated indecomposable (resp. simple) $\bar{F}G$-modules, $\text{ITI}(\bar{F}G)$, (resp. $\text{ITS}(\bar{F}G)$) decomposes into disjoint non-empty subsets indexed by the set of representatives of the isomorphism types of finitely generated indecomposable (resp. simple) $FG$-modules, $\text{ITI}(FG)$, (resp. $\text{ITS}(FG)$).
Moreover the vertices of the vertices of the labelling indecomposable (resp. simple) $FG$-modules are the vertices of each module in the corresponding subset and there are other Green Theory connections.

After receiving the e-mail \cite{5}, the author observed that important hypotheses were carelessly omitted in \cite[Proposition 2.7]{2} so that the results of \cite[Section 3]{2} were not proved.
Subsequently he realized that:
\begin{enumerate}[(1)]
\item Every element of $\bar{F}$ is algebraic over $F$ so that every finitely generated subfield of $\bar{F}$ is finite.
\end{enumerate}

Since the hypotheses of \cite[Section 3]{2} include that $F$ and $\bar{F}$ are as above, (1) replaces the reerence to \cite[Proposition 2.7]{2} to complete the proofs of the statements in \cite[Section 3]{2}.

This article is devoted to clarifying this situation which refines the fundamental Green Theory of Invariants of Indecomposable Modules in Finite Group Modular Representation Theory under the fields hypothesis above.
Note that then the indecomposable (resp. simple) modules are absolutely indecomposable (resp. absolutely simple).

This paper is devoted to proving the results of \cite[Section 3]{2} and to extending the basic results to finitely generated indecomposable modules.

These results refine the Green Theory of indecomposable modules.
Also we obtain a new formula for the finite number of isomorphism types of simple $\bar{F}G$-modules for any finite group $G$.

Our notation and terminology are standard and tend to follow \cite{1}, \cite{3} and \cite{4}.
All rings have identities, are noetherian and all modules over a ring are unitary and finitely generated right modules.

Let $R$ be a ring.
Then mod-$R$ will denote the abelian category of right $R$-modules.
If $U$ and $V$ are (right) $R$-modules, then $U \mid V$ signifies that $U$ is isomorphic to a direct summand of $V$ in mod-$R$.
If $U \mid V$ and $U$ is indecomposable in mod-$R$, then $U$ is said to be a component of $V$.

We shall often, and without mention, identify isomorphic modules in mod-$R$.

Throughout this paper $G$ will be a finite group and $\Phi$ will be a field of prime characteristic $p$ and $F$ will denote the unique minimal subfield of $\Phi$ of order $p$.

Also $K$ will denote a subfield of $\Phi$ and $KG$ will denote the group algebra of $G$ over $K$.
Note that the consequences of the basic Krull--Schmidt Theorem (\cite[I, Theorem 2.3]{3}) hold in the category of right $KG$-modules.
We shall frequently apply this Theorem without reference.
Also we shall assume that the results of \cite[Section 1 and Section 2 through Theorem 2.6]{2} hold.

Let $\text{ITI}(KG)$ (resp. $\text{(ITS}(KG)$) denote a set of representatives of the isomorphism types of indecomposable (resp. simple) modules in mod-$KG$.

As in \cite[Section 2]{2}, let $\Omega(G)$ be a set of representatives of the isomorphism types of indecomposable $KG$-modules for all subfields $K$ of $\Phi$ and let $\Delta$ be the set of finite subfields of $\Phi$ so that $F \in \Delta$.
Let
$$
I(\Omega(G)) = \{(K,V) \mid K \text{ is a subfield of } \Phi \text{ and } V \in \text{ITI}(KG)\}
$$
and let $S(\Omega(G)) = \{(K,V) \in I(\Omega(G)) \mid V \in \text{ITS}((KG)\}$.
Define a relation $\uparrow$ on $I(\Omega(G))$ by:

if $(K,V)$ and $(L,U) \in I(\Omega(G))$, then $(K,V) \uparrow (L,U)$ if $K$ is a subfield of $L$ and $U \mid V \otimes_K L$ in mod-$LG$ (i.e., $U$ is a component of $V \otimes_K L$ in mod-$LG$).

Thus $\uparrow$ is a reflexive and transitive relation on $I(\Omega(G))$.

Set $\text{FI}(\Omega(G)) = \{(K,V) \in I(\Omega(G)) \mid K \in \Delta \}$ and $\text{FS}(\Omega(G)) = \{(K,V) \in \text{FI}(\Omega(G)) \mid V \in \text{ITS}(KG)\}$.

Let $W \in \text{ITI}(FG)$ and set $\mathcal{E}(W) = \{(K,V) \in \text{FI}(\Omega(G)) \mid (F,W) \uparrow (K,V)\}$ and let $(Q,\mathscr{W})$ be a vertex-source pair of $W$ and set $H = N_G(Q)$.

We shall require (cf. \cite[Section 2]{2} and \cite[Section 26]{1}):

\begin{enumerate}

\item[(2)]
If $(K,V)$ and $(L,U) \in \text{FI}(\Omega(G))$ with $K \leq L$, then $(K,V) \uparrow (L,U)$ if and only if $V \mid \text{Res}_K^L(U)$ in mod-$KG$.

\begin{proof}
\cite[Proposition 1.14 (a) and (b)]{2} suffice.
\end{proof}

\item[(3)] $\mathcal{E}(W) \neq \varnothing$

\begin{proof}
Since $(F,W) \in \mathcal{E}(W)$, we are done.
\end{proof}

\item[(4)] If $(K,V) \in \mathcal{E}(W)$, then $\text{Res}_F^K(V) \cong s W$ in mod-$FG$ for some positive integer $s$.

\begin{proof}
\cite[Proposition 1.14]{2} yields a proof.
\end{proof}

\item[(5)] If $(K,V) \in \text{FI}(\Omega(G))$ and $V$ is a component of $W \otimes_F K$ in mod-$KG$, then $\{(K,V^\sigma) \mid \sigma \in \text{Aut}(K) \} \subseteq \mathcal{E}(W)$ and every element of $\mathcal{E}(W)$ with first component $K \in \Delta$ equals $(K,V^\sigma)$ for some $\sigma \in \text{Aut}(K)$.

\begin{proof}
Here \cite[VII, Theorem 1.20]{4} applies.
\end{proof}

\item[(6)] If $(K,V) \uparrow (L,U)$ in $\text{FI}(\Omega(G))$, then $(K,V) \in \mathcal{E}(W)$ if and only if $(L,U) \in \mathcal{E}(W)$.

\begin{proof}
Clearly (2) and (4) yield a proof.
\end{proof}

\item [(7)] \label{(7)}
If $(K,V) \in \mathcal{E}(W)$, then $Q$ is a vertex of $V$, there is a $Q$-source $U$ of $V$ such that $U \mid \mathscr{W} \otimes_F K$ in mod-$KQ$ and $\text{Gr}_G^H(V) \mid \text{Gr}_G^H(W) \otimes_F K$ in mod-$KH$ where $\text{Gr}_G^H$ denotes the Green correspondence from $G$ to $H = N_G(Q)$.

\begin{proof}
	Here \cite[Proposition 1.13]{2} completes the proof.
\end{proof}

\item[(8)]
The following three conditions are equivalent:

\begin{enumerate}[(i)]

\item $W$ is a simple $FG$-module;

\item There is a $(K,V) \in \mathcal{E}(W)$ such that $V$ is a simple $KG$-module; and

\item If $(K,V) \in \mathcal{E}(W)$, then $V$ is a simple $KG$-module.

\end{enumerate}

\begin{proof}
Here \cite[Lemma 1.15 and Theorem 1.16]{4} suffice.
\end{proof}

\end{enumerate}

\begin{proposition}
$\text{Gr}_G^H(W)$ is an indecomposable $FH$-module with vertex-source pair $(Q,\mathscr{W})$ and $\text{Gr}_G^H$ yields a bijection $: \mathcal{E}(W) \to \mathcal{E}(\text{Gr}_G^H(W))$.
\end{proposition}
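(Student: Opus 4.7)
The plan is to handle the two claims of the proposition separately, leaning on items (2)--(7) above and on the behavior of the classical Green correspondence under Galois twisting.

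For the first claim, I would simply invoke the standard Green correspondence theorem applied over $F$: since $W$ is an indecomposable $FG$-module with vertex $Q$ and source $\mathscr{W}$, its image $\text{Gr}_G^H(W)$ is an indecomposable $FH$-module whose vertex is $Q$ and which admits a source that is $H$-conjugate to $\mathscr{W}$. Hence $(Q,\mathscr{W})$ is a vertex-source pair of $\text{Gr}_G^H(W)$.

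For the bijection, I would define a candidate map $\Psi\colon \mathcal{E}(W)\to\mathcal{E}(\text{Gr}_G^H(W))$ by $(K,V)\mapsto(K,\text{Gr}_G^H(V))$, where the inner Green correspondence is taken over the field $K$. Well-definedness is immediate from (7): that item guarantees both that $V$ has vertex $Q$ (so its Green correspondent is defined) and that $\text{Gr}_G^H(V)\mid \text{Gr}_G^H(W)\otimes_F K$, which places $(K,\text{Gr}_G^H(V))$ in $\mathcal{E}(\text{Gr}_G^H(W))$. Injectivity of $\Psi$ reduces, since the first coordinate is preserved, to the injectivity of the Green correspondence bijection over each individual field $K$, which is part of the classical statement.

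Surjectivity is the main obstacle. Given $(L,X)\in\mathcal{E}(\text{Gr}_G^H(W))$, the module $X$ is an indecomposable component of $\text{Gr}_G^H(W)\otimes_F L$; since vertices are preserved under extension of scalars, $X$ has vertex $Q$, so the inverse Green correspondence over $L$ yields a unique indecomposable $LG$-module $V$ with vertex $Q$ and $\text{Gr}_G^H(V)\cong X$. The remaining task is to verify $V\mid W\otimes_F L$. To that end, I would fix any component $V_0$ of $W\otimes_F L$; by the well-definedness argument $\text{Gr}_G^H(V_0)$ is a component of $\text{Gr}_G^H(W)\otimes_F L$, and (5) applied to $\text{Gr}_G^H(W)$ produces $\sigma\in\text{Aut}(L)$ with $X\cong \text{Gr}_G^H(V_0)^{\sigma}$.

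The only nontrivial technical ingredient is that Green correspondence commutes with the Galois twist, i.e., $\text{Gr}_G^H(V_0)^{\sigma}\cong \text{Gr}_G^H(V_0^{\sigma})$. This holds because the functor $(-)^{\sigma}$ on $LG$-modules preserves the group action and intertwines with restriction and induction, hence respects the relative projectivity data that characterize the Green correspondent. Granting this, $X\cong \text{Gr}_G^H(V_0^{\sigma})$ and uniqueness forces $V\cong V_0^{\sigma}$; then (5) applied to $W$ guarantees that $V_0^{\sigma}$ is a component of $W\otimes_F L$, so $(L,V)\in\mathcal{E}(W)$ and $\Psi(L,V)=(L,X)$. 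I expect the Galois-equivariance of the Green correspondent to be the only subtle point; it is a formal consequence of the defining properties and is where I would focus the careful bookkeeping.
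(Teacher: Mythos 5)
Your proof is correct, and since the paper's own proof is just a two-citation pointer to \cite{2} (Propositions 1.12(c) and 1.13(b)), you have essentially reconstructed from scratch the argument those citations encapsulate. The well-definedness and injectivity of your map $\Psi\colon (K,V)\mapsto(K,\text{Gr}_G^H(V))$ follow cleanly from item (7) and the classical Green bijection, exactly as you say.

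Where your route diverges from the most economical one is in surjectivity. Your detour through Galois twisting works, and the lemma you flag --- that $\text{Gr}_G^H$ commutes with the functor $(-)^\sigma$ --- is indeed true for the formal reason you indicate (twisting commutes with $\text{Res}$, $\text{Ind}$, and conjugation, hence with everything that defines the correspondent). However, the twist is avoidable. Given $(L,X)\in\mathcal{E}(\text{Gr}_G^H(W))$, every component $X$ of $\text{Gr}_G^H(W)\otimes_F L$ has vertex $Q$ by (7), and from $\text{Gr}_G^H(W)\mid\text{Res}_H^G(W)$ one gets $X\mid\text{Res}_H^G(W)\otimes_F L=\text{Res}_H^G(W\otimes_F L)=\bigoplus_i\text{Res}_H^G(V_i)$, where the $V_i$ are the components of $W\otimes_F L$, each of vertex $Q$. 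So $X\mid\text{Res}_H^G(V_i)$ for some $i$, and since $X$ has vertex $Q$ the defining uniqueness in Green correspondence forces $X\cong\text{Gr}_G^H(V_i)$; surjectivity follows with $V=V_i$, and item (5) never needs to be invoked. This shortcut saves you from proving (or citing) the Galois-equivariance lemma, though nothing in your version is wrong --- it merely routes the key uniqueness through (5) rather than through the $\mathfrak{x}$-$\mathfrak{y}$-$\mathfrak{z}$ analysis of $\text{Res}_H^G$.

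One small stylistic caution: the phrase ``since vertices are preserved under extension of scalars'' is not a general fact; what you actually need (and have) is item (7) applied to $\text{Gr}_G^H(W)$, and it would be better to cite that directly rather than appeal to a slogan that is false in other settings.
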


\begin{proof}
It is easy to see that \cite[Propositition 1.12 (c) and Proposition 1.13 (b) ]{2} imply the Proposition.
\end{proof}

For the remainder of the article, as in \cite[Section 3]{2}, assume that $\Phi = \bar{F}$ is an algebraic closure of $F$.

\begin{theorem}
\label{theorem2}
Let $J$ be a subfield of $\bar{F}$ and let $X$ be an indecomposable $JG$-module with vertex $Q$.
Then there is a $(K,Y) \in \text{FI}(\Omega(G))$ where $K$ is a finite subfield of $J$, $Q$ is a vertex of $Y$ in mod-$KG$ and $Y \otimes_K J \cong X$ in mod-$JG$.
\end{theorem}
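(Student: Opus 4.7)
The plan is to descend $X$ to a finite subfield $K$ of $J$ using the key input (1), and then check that the descended module inherits both indecomposability and vertex $Q$.

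First I would fix a $J$-basis $\{x_1,\ldots,x_n\}$ of $X$ and, for each $g\in G$, record the matrix $M_g\in M_n(J)$ representing the action of $g$ in this basis. Let $K$ be the subfield of $J$ generated over $F$ by the (finitely many) entries of the matrices $\{M_g : g\in G\}$. Then $K$ is a finitely generated subfield of $\bar F$, so (1) forces $K$ to be finite; in particular $K\in\Delta$ and $K\subseteq J$. Setting $Y:=K^n$ with the $G$-action given by the same matrices $M_g$ yields a $KG$-module, and the canonical $JG$-map $Y\otimes_K J\to X$ is an isomorphism.

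Next, if $Y$ decomposed nontrivially as $Y_1\oplus Y_2$ in mod-$KG$, then since $J$ is free over $K$ we would obtain $X\cong (Y_1\otimes_K J)\oplus(Y_2\otimes_K J)$ with both summands nonzero, contradicting indecomposability of $X$. So $Y$ is indecomposable in mod-$KG$. For the vertex I would invoke Higman's criterion: $Y$ is $R$-projective in mod-$KG$ (for $R\le G$) iff $\mathrm{id}_Y$ lies in the image of the transfer $\mathrm{tr}_R^G:\mathrm{End}_{KR}(Y)\to\mathrm{End}_{KG}(Y)$. Because $Y$ is finite-dimensional over $K$, the standard base-change isomorphisms $\mathrm{End}_{KH}(Y)\otimes_K J\cong\mathrm{End}_{JH}(X)$ for $H\in\{R,G\}$ intertwine the $K$-transfer with the $J$-transfer. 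Faithful flatness of $K\hookrightarrow J$ then ensures that $\mathrm{id}_Y$ lies in the image of the $K$-transfer iff $\mathrm{id}_X$ lies in the image of the $J$-transfer. Hence $Y$ and $X$ are $R$-projective for exactly the same subgroups $R$, so their vertex sets coincide up to $G$-conjugacy, and we may take $Q$ as a vertex of $Y$.

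The step I expect to be the main obstacle is the vertex coincidence. Property (7) of the excerpt only handles the extension from $F$ up to a finite field $K$, so it does not a priori bridge the possibly-infinite $J$ back down to $K$; an attempt via (7) alone would require chasing sources of an auxiliary $FG$-module through two separate field extensions. Higman's criterion sidesteps this by reducing the statement to the surjectivity of a $K$-linear map between finite-dimensional $K$-algebras, a property that base-changes cleanly in both directions.
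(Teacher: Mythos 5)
Your proposal is correct, but it departs from the paper's proof in two substantive ways. The paper enlarges $K$ further than you do: after adjoining the matrix entries of the $G$-action (your $K$), it also adjoins the coefficients, in the basis $B^\ast$ of $\operatorname{Ind}_Q^G\operatorname{Res}_Q^G(X)$, of a $JG$-module splitting $\phi\colon X\to\operatorname{Ind}_Q^G\operatorname{Res}_Q^G(X)$ witnessing $Q$-projectivity of $X$. With that larger $K$, the section $\phi$ restricts to a $KG$-module section $\phi^\ast\colon Y\to\bigoplus_{t\in T}\operatorname{Res}_Q^G(Y)\otimes t$, so $Q$-projectivity of $Y$ falls out immediately and constructively; indecomposability of $Y$ and the promotion from ``$Y$ is $Q$-projective'' to ``$Q$ is a vertex of $Y$'' are then delegated to the cited Lemma 1.6(b) of \cite{2}. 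You instead keep $K$ minimal, derive indecomposability directly (a nontrivial decomposition of $Y$ would extend to one of $X$), and transport $R$-projectivity in both directions via Higman's criterion together with the base-change isomorphism $\operatorname{End}_{KH}(Y)\otimes_K J\cong\operatorname{End}_{JH}(X)$ for $H\in\{R,G\}$ and the fact that a $K$-subspace is detected after the faithfully flat extension $K\hookrightarrow J$. Both routes are sound. The paper's is more elementary and hands the vertex conclusion to an already-proved external lemma; yours is more self-contained and gives vertex coincidence in a single symmetric pass, at the cost of invoking the base-change-of-endomorphism-algebras machinery and checking its compatibility with the relative trace maps, which, while standard, is a genuine extra verification. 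One small remark: since the definition of $\text{FI}(\Omega(G))$ requires $Y\in\text{ITI}(KG)$, your explicit indecomposability argument is actually doing necessary work that the paper outsources to \cite[Lemma 1.6(b)]{2}, so it is good that you did not omit it.
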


\begin{proof}
Let $B$ be a $J$-basis of $X$ and let $A$ denote the subfield of $J$ generated by the coefficients of $g$ on $B$ for all $g \in G$ so that $A$ is a finite subfield of $J$ by (1).

Let $T$ be a right transversal of $Q$ in $G$ with $1 \in T$ so that $G = \bigcup_{t \in T}(Qt)$ is disjoint.

Here $\text{Ind}_Q^G(\text{Res}_Q^G(X) \cong \bigoplus_{t \in T}(\text{Res}_Q^G(X) \otimes t)$ in mod-$JG$ and $B^* = \{b \otimes t \mid b \in B \text{ and } t \in T\}$ is a $J$-basis of $\bigoplus_{t \in T} (\text{Res}_Q^G(X) \otimes t)$.

Clearly we have the canonical $JG$-module surjection $\pi : \bigoplus_{t \in T}(\text{Res}_Q^G(X) \otimes t) \to X$ such that $x \otimes t \mapsto xt$ for all $x \in X$ and $t \in T$.
Also, since $X$ is $Q$-projective, there is a $JG$-module injection $\phi : X \to \bigoplus_{t \in T} (\text{Res}_Q^G(X) \otimes t)$ such that $\pi \circ \phi = \text{Id}_X$.

Let $C$ denote the (finite) subfield of $J$ generated by the coefficients of $\phi(b)$ on $B^*$ for all $b \in B$ and let $K$ denote the subfield of $J$ generated by $A$ and $C$ so that $K$ is a finite subfield of $J$ by (1).

Set $Y = \bigoplus_{b \in B}(bK)$ so that $Y$ is a $KG$-module.
Also set $Z = \bigoplus_{t \in T}(\text{Res}_Q^G(Y) \otimes t)$ so that $Z$ is also a $KG$-module.
Here $Y \otimes_K J \cong \bigoplus_{b\in B} (bJ) \cong X$ in mod-$JG$ and $\Pi$ induces a $KG$-module surjection $\Pi^* : Z = \bigoplus_{t \in T}(\text{Res}_Q^G(Y) \otimes t) \to Y$ since $Y = \bigoplus_{b \in B}(bK)$ is a $KG$-module.

Also $\phi(b) \in \Sigma_{t \in T}(Y \otimes t)$ for all $b \in B$ so that $\phi$ induces a $KG$-module injection $\phi^* : Y \to \bigoplus_{t \in T}(\text{Res}_Q^G(Y) \otimes t)$ in mod-$JK$.
Thus $\Pi^* \circ \phi^* = \text{Id}_Y$.
Also $Y$ is a $Q$-projective $KG$-module and $X \cong Y \otimes_K J$ in mod-$JG$.
Since $X$ has $Q$ as a vertex, \cite[Lemma 1.6 (b)]{2} implies that $(K,Y) \in \text{FI}(\Omega(G))$ and $Q$ is a vertex of $Y$ which completes the proof.
\end{proof}

\begin{proposition}
\label{proposition3}
Let $J$ be a subfield of $\bar{F}$ and let $X$ be a component of $W \otimes_F J$ in mod-$JG$.
Then there is a $(K,V) \in \mathcal{E}(W)$ such that $K$ is a finite subfield of $J$ and $V \otimes_K J \cong X$ in mod-$JG$.
\end{proposition}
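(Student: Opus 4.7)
The plan is to construct $(K,V)$ directly via an idempotent descent argument, bypassing Theorem \ref{theorem2}. Since $X$ is a direct summand of $W \otimes_F J$ in mod-$JG$, write $W \otimes_F J = X \oplus X'$ and let $e \in \text{End}_{JG}(W \otimes_F J)$ be the projection idempotent with image $X$. The first step is to identify $\text{End}_{JG}(W \otimes_F J)$ with $\text{End}_{FG}(W) \otimes_F J$ as $J$-algebras. This is standard: the extension-restriction adjunction gives $\text{End}_{JG}(W \otimes_F J) \cong \text{Hom}_{FG}(W, \text{Res}_F^J(W \otimes_F J))$, and the latter equals $\text{End}_{FG}(W) \otimes_F J$ because $\text{Res}_F^J(W \otimes_F J) \cong \bigoplus_{\mu} W$ as $FG$-module (indexed by an $F$-basis of $J$) and $\text{Hom}_{FG}(W, -)$ commutes with direct sums for finitely presented $W$.

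Under this identification, write $e = \sum_{i=1}^{n} \phi_i \otimes j_i$ with $\phi_i \in \text{End}_{FG}(W)$ and $j_i \in J$; this is a finite sum because $\text{End}_{FG}(W)$ is a finite-dimensional $F$-algebra. Let $K$ be the subfield of $J$ generated over $F$ by $j_1, \ldots, j_n$. By fact (1), $K$ is a finite subfield of $J$. Then $e$ lies in $\text{End}_{FG}(W) \otimes_F K \cong \text{End}_{KG}(W \otimes_F K)$ and remains an idempotent there, so $V := e(W \otimes_F K)$ is a direct summand of $W \otimes_F K$ in mod-$KG$. Since formation of the image of an idempotent commutes with base change, $V \otimes_K J \cong e(W \otimes_F J) = X$ in mod-$JG$.

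The final step is to check that $V$ is indecomposable. Suppose $V = V_1 \oplus V_2$ with both $V_i \neq 0$ in mod-$KG$. Then $V \otimes_K J \cong (V_1 \otimes_K J) \oplus (V_2 \otimes_K J)$, and each summand is nonzero since tensoring a nonzero $K$-vector space with the field extension $J$ remains nonzero; this contradicts the indecomposability of $X = V \otimes_K J$. Hence $V$ is an indecomposable component of $W \otimes_F K$, giving $(F,W) \uparrow (K,V)$ and so $(K,V) \in \mathcal{E}(W)$ with $V \otimes_K J \cong X$, as required. The main obstacle I anticipate is carefully verifying the endomorphism-algebra isomorphism and the fact that base change of an idempotent's image agrees with the image of the base-changed idempotent; once these two points are in place, fact (1) supplies the finiteness of $K$ essentially for free.
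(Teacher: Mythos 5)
Your proof is correct, but it takes a genuinely different route from the paper's. The paper invokes Theorem~\ref{theorem2}, which produces a finite subfield $K$ of $J$ by collecting the $J$-coefficients of the group action and of a $Q$-projectivity splitting map on a chosen basis; it then obtains $V \otimes_K J \cong X$ with $X \mid (W \otimes_F K) \otimes_K J$ and applies the Noether--Deuring theorem \cite[VII, Theorem 1.21]{4} to descend the divisibility to $V \mid W \otimes_F K$ in mod-$KG$. You instead work on the endomorphism-algebra side: you identify $\text{End}_{JG}(W \otimes_F J)$ with $\text{End}_{FG}(W) \otimes_F J$, write the projection idempotent $e$ onto $X$ as a finite tensor sum, let $K$ be the (necessarily finite, by (1)) subfield generated by the $J$-coefficients of $e$, and descend $e$ to an idempotent of $\text{End}_{KG}(W \otimes_F K)$ whose image is the desired $V$. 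Your approach is more self-contained for this specific proposition --- it avoids both Theorem~\ref{theorem2} and Noether--Deuring, using only flatness of the field extension and finite-dimensionality of $\text{End}_{FG}(W)$. The paper's detour through Theorem~\ref{theorem2} has the advantage that that theorem applies to an arbitrary indecomposable $JG$-module $X$ (not only to components of $W \otimes_F J$) and carries the vertex $Q$ along with the descent, which is used elsewhere in the paper; your argument produces no vertex information directly, but none is needed in Proposition~\ref{proposition3} since fact~(7) supplies it once $(K,V) \in \mathcal{E}(W)$ is established. One small point of presentation: the finiteness of the sum $e = \sum_i \phi_i \otimes j_i$ is automatic for any element of a tensor product; finite-dimensionality of $\text{End}_{FG}(W)$ is what lets you (if you wish) pick a canonical representation relative to an $F$-basis, but you only need some finite representation, so this is harmless.
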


\begin{proof}
By Theorem \ref{theorem2} there is a $(K,V) \in \text{FI}(\Omega(G))$ with $K$ a finite subfield of $J$ such that $V \otimes_K J \cong X$ in mod-$JG$.
Here $X \mid W \otimes_F K \otimes_K J$ so that \cite[VII, Theorem 1.21]{4} implies that $V \mid W \otimes_F K$ in $K$-mod.
Thus $(F,W) \uparrow (K,V)$, $(K,V) \in \mathcal{E}(W)$ and we are done.
\end{proof}

From \cite[VII, Lemma 2.2 and 6.7 (b)]{4} we have:

\begin{remark}
An indecomposable (resp. simple) $\bar{F}G$-module is an absolutely indecomposable (resp. simple) module.
\end{remark}

\begin{remark}
Let $W \in \text{ITS}(FG)$ and $(K,V) \in \mathcal{E}(W)$. Then:
\begin{enumerate}[(a)]

\item $V \in \text{ITS}(KG)$;

\item If $Y \in \text{ITI}(\bar{F}G)$ is such that $Y \cong V \otimes_K \bar{F}$ in mod-$\bar{F}G$, then $Y \in \text{ITS}(\bar{F}G)$; and

\item $W$ is a simple $FG$-module.
\end{enumerate}
\end{remark}

\begin{proof}
Clearly (7) implies (a) and \cite[VII, Lemma 1.15]{4} and (8) complete the proof.
\end{proof}

In our final two results we complete the proofs of and extend the results of \cite[Section 3]{2}.
These results refine the Green Theory of indecomposable modules.

\begin{theorem}
\begin{enumerate}[(a)]

\item The map $\Gamma : \text{ITI}(\bar{F}G) \to \text{ITI}(FG)$ such that if $Y \in \text{ITI}(\bar{F}G)$, then $\Gamma(Y) = W \in \text{ITI}(FG)$ if there is a $(K,V) \in \mathcal{E}(W)$ such that $V \otimes_K \bar{F} \cong Y$ in mod-$\bar{F}G$ is well-defined.
In which case, $Q$ is a vertex of $V$ and $Y$ and if $(Q,\mathscr{V})$ is a vertex-source pair of $V$, if $(Q,\mathscr{Y})$ is a vertex-source pair of $Y$, and if $H = N_G(Q)$, then $\mathscr{Y}^h \mid \mathscr{V} \otimes_K \bar{F}$ in mod-$\bar{F}Q$ for some $h \in H$ and $\text{Gr}_G^H(Y) \mid \text{Gr}_G^H(V) \otimes_K \bar{F}$ in mod-$\bar{F}H$;

\item $\Gamma^{-1}(W) \neq \phi$ for all $W \in \text{ITI}(FG)$; and

\item $\text{ITI}(\bar{F}G) = \bigcup_{W \in \text{ITI}(FG)}\Gamma^{-1}(W)$ is a partition of $\text{ITI}(\bar{F}G)$.

\end{enumerate}
\end{theorem}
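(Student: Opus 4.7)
The plan is to build $\Gamma$ in two stages---existence and uniqueness of the $W \in \text{ITI}(FG)$ attached to each $Y \in \text{ITI}(\bar{F}G)$---and then to read off the vertex-source and Green correspondence assertions of (a), together with (b) and (c), from Theorem~\ref{theorem2}, Proposition~\ref{proposition3}, and the items (1)--(8).

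For existence, given $Y \in \text{ITI}(\bar{F}G)$ I would apply Theorem~\ref{theorem2} with $J = \bar{F}$ to obtain $(K, V) \in \text{FI}(\Omega(G))$ with $K$ a finite subfield of $\bar{F}$ and $V \otimes_K \bar{F} \cong Y$ in mod-$\bar{F}G$. Since $K/F$ is finite, the isotypic-restriction property underlying (4), that is, \cite[Proposition 1.14]{2}, writes $\text{Res}_F^K(V) \cong sW$ for a uniquely determined $W \in \text{ITI}(FG)$, and then (2) forces $(K, V) \in \mathcal{E}(W)$; I set $\Gamma(Y) := W$.

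For uniqueness, suppose $(K_1, V_1) \in \mathcal{E}(W_1)$ and $(K_2, V_2) \in \mathcal{E}(W_2)$ both satisfy $V_i \otimes_{K_i} \bar{F} \cong Y$. I would pass to a common $L \in \Delta$ containing $K_1$ and $K_2$ and set $U_i := V_i \otimes_{K_i} L$. Each $U_i$ is indecomposable, because $U_i \otimes_L \bar{F} \cong Y$ is indecomposable, and (6) yields $(L, U_i) \in \mathcal{E}(W_i)$. The crucial step is to show $U_2 \cong U_1^{\sigma}$ for some $\sigma \in \text{Aut}(L)$, which I would derive from the Galois-descent material \cite[VII, Theorem 1.20]{4} already invoked in (5): the two indecomposable $L$-forms $U_1$, $U_2$ of the indecomposable module $Y$ must lie in a single $\text{Aut}(L)$-orbit. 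Once this is in hand, (5) applied to $W_1$ puts $(L, U_2) = (L, U_1^{\sigma})$ in $\mathcal{E}(W_1) \cap \mathcal{E}(W_2)$, and two applications of (4) give $\text{Res}_F^L(U_2) \cong sW_1 \cong s'W_2$, forcing $W_1 \cong W_2$. I expect this Galois-conjugacy claim to be the main obstacle of the proof.

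Once $\Gamma$ is well defined, the remaining content of (a) follows by standard extraction. Item (7) already gives $Q$ as a vertex of $V$; since $V$ is $Q$-projective so is $Y = V \otimes_K \bar{F}$, and the indecomposable $Y$ cannot acquire a strictly smaller vertex under extension of scalars. Choosing a $Q$-source $\mathscr{V}$ of $V$, decomposing $\mathscr{V} \otimes_K \bar{F}$ into indecomposable $\bar{F}Q$-summands, and applying Krull--Schmidt to $Y \mid \text{Ind}_Q^G(\mathscr{V} \otimes_K \bar{F})$ isolates a summand $\mathscr{V}_j$ that is a $Q$-source of $Y$, whence $\mathscr{Y}^h \cong \mathscr{V}_j \mid \mathscr{V} \otimes_K \bar{F}$ for some $h \in H$. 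The same extraction applied to $V \mid \text{Ind}_H^G(\text{Gr}_G^H(V))$, combined with the characterization of $\text{Gr}_G^H(Y)$ as the unique vertex-$Q$ indecomposable summand of $\text{Res}_H^G(Y)$, delivers $\text{Gr}_G^H(Y) \mid \text{Gr}_G^H(V) \otimes_K \bar{F}$. Part (b) is then immediate from Proposition~\ref{proposition3}: for any $W \in \text{ITI}(FG)$ and any indecomposable component $Y$ of $W \otimes_F \bar{F}$, there is $(K, V) \in \mathcal{E}(W)$ with $V \otimes_K \bar{F} \cong Y$, so $\Gamma(Y) = W$. Finally, (c) is automatic because $\Gamma$ is a well-defined map on $\text{ITI}(\bar{F}G)$ with nonempty fibres, so its fibres partition the source.
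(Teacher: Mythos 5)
Your proposal follows the same high-level skeleton as the paper's (very terse) proof: Theorem~\ref{theorem2} and items (2)--(8) for well-definedness of $\Gamma$, Proposition~\ref{proposition3} with $J = \bar{F}$ for (b), and the observation that a well-defined surjective map partitions its source for (c). The vertex-source extraction and the two divisibility claims for $\mathscr{Y}$ and $\text{Gr}_G^H(Y)$ are carried out correctly by the standard "push a $Q$-source and a Green correspondent through $-\otimes_K \bar{F}$ and apply Krull--Schmidt" argument, and parts (b) and (c) are correct as stated.

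The one genuine gap is in the uniqueness half of well-definedness, precisely the step you flag as the main obstacle. You invoke \cite[VII, Theorem 1.20]{4} to claim that the two $L$-forms $U_1$, $U_2$ of $Y$ must lie in a single $\text{Aut}(L)$-orbit. But that theorem describes the decomposition of $V \otimes_K L$ for a \emph{fixed} indecomposable $V$ over a \emph{finite} Galois extension $L/K$; it does not, as stated, say anything about two a priori unrelated indecomposable $LG$-modules that merely become isomorphic over $\bar{F}$. To deduce Galois-conjugacy from it you would already need $U_1$ and $U_2$ to be components of the same $W \otimes_F L$, i.e.\ you would need $W_1 \cong W_2$, which is the very thing you are trying to prove, so the argument as written is circular. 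The correct and in fact stronger tool is the Noether--Deuring theorem: since $LG$ is a finite-dimensional $L$-algebra and $U_1 \otimes_L \bar{F} \cong U_2 \otimes_L \bar{F}$ in mod-$\bar{F}G$, one gets $U_1 \cong U_2$ outright in mod-$LG$. Then $(L, U_1) \in \mathcal{E}(W_1) \cap \mathcal{E}(W_2)$, two applications of (4) give $\text{Res}_F^L(U_1) \cong s_1 W_1 \cong s_2 W_2$, and Krull--Schmidt yields $W_1 \cong W_2$. So your structure is sound, but the cited justification should be replaced and in fact the Galois-conjugacy detour (and item (5)) can be dropped from the uniqueness step entirely.
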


\begin{proof}
Here Theorem 2, \cite[Lemma 2.3 and Proposition 1.12]{2} yield (a).
Also Proposition \ref{proposition3} with $J = \bar{F}$ yields (b) and (c) is immediate to conclude the proof.
\end{proof}

\begin{theorem}
\begin{enumerate}[(a)]
\item The map $\Sigma : \text{ITS}(\bar{F}G) \to \text{ITS}(FG)$ such that if $X \in \text{ITS}(\bar{F}G)$, then $\Sigma(X) = W \in \text{ITS}(FG)$ such that there is a $(K,V) \in \mathcal{E}(W)$ such that $V \otimes_K \bar{F} \cong X$ in mod-$\bar{F}G$ is well-defined.
Here $V \in \text{ITS}(KG)$, $Q$ is a vertex of $V$ and $X$.
Set $H = N_G(Q)$ and let $(Q,\mathscr{V})$ (resp. $(Q, \mathscr{Y})$ be a vertex-source pair of $V$ (resp. $X$) then $\mathscr{Y}^h \mid \mathscr{V} \otimes_K \bar{F}$ for some $h \in H$ in mod-$\bar{F}G$, and $\text{Gr}_G^H(X) \mid \text{Gr}_G^H(V) \otimes_K \bar{F}$ in mod-$\bar{F}H$;

\item $\Sigma^{-1}(W) \neq \phi$ for all $W \in \text{ITS}(FG)$; and

\item $\text{ITS}(\bar{F}G) = \bigcup_{W \in \text{ITS}(FG)} \Sigma^{-1}(W)$ is a partition of $\text{ITS}(\bar{F}G)$.
\end{enumerate}
\end{theorem}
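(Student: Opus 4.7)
The plan is to mirror the structure of the preceding theorem for $\Gamma$, inserting simplicity arguments at each step via the equivalences in (8) and the second Remark above. The previous theorem already furnishes a well-defined map $\Gamma : \text{ITI}(\bar{F}G) \to \text{ITI}(FG)$ with the stated vertex-source and Green-correspondence data, so it suffices to check that $\Gamma$ carries $\text{ITS}(\bar{F}G)$ into $\text{ITS}(FG)$ and then to verify surjectivity of the restricted map onto $\text{ITS}(FG)$.

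For well-definedness in (a), take $X \in \text{ITS}(\bar{F}G)$ and set $W := \Gamma(X) \in \text{ITI}(FG)$; by the previous theorem there is $(K,V) \in \mathcal{E}(W)$ with $V \otimes_K \bar{F} \cong X$. Since $\bar{F}/K$ is a flat extension and $K$-dimensions are preserved under $-\otimes_K \bar{F}$, any proper $KG$-submodule of $V$ would yield a proper $\bar{F}G$-submodule of $X$, contradicting simplicity of $X$; hence $V \in \text{ITS}(KG)$. Then the implication (ii)$\Rightarrow$(i) in (8) gives $W \in \text{ITS}(FG)$, so $\Sigma$ is well-defined into $\text{ITS}(FG)$. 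The assertions about the common vertex $Q$, the source relation $\mathscr{Y}^h \mid \mathscr{V} \otimes_K \bar{F}$ for some $h \in H$, and the Green correspondence $\text{Gr}_G^H(X) \mid \text{Gr}_G^H(V) \otimes_K \bar{F}$ transfer verbatim from the preceding theorem.

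For (b), given $W \in \text{ITS}(FG)$ pick any component $X$ of $W \otimes_F \bar{F}$ in mod-$\bar{F}G$ and apply Proposition \ref{proposition3} with $J = \bar{F}$ to produce $(K,V) \in \mathcal{E}(W)$ with $V \otimes_K \bar{F} \cong X$. Parts (a) and (b) of the second Remark then give $V \in \text{ITS}(KG)$ and $X \in \text{ITS}(\bar{F}G)$ respectively, showing $X \in \Sigma^{-1}(W)$; hence $\Sigma^{-1}(W) \neq \varnothing$. Part (c) is then immediate, being simply the partition of the domain by the nonempty fibers of a well-defined map.

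The main obstacle is the well-definedness step: one must propagate simplicity upward from the given simple $\bar{F}G$-module $X$ first to $V \in \text{ITS}(KG)$ using flatness of $\bar{F}/K$, and then from $V$ to $W$ via the equivalences in (8). The rest of the argument is essentially formal once $\Gamma$ and Proposition \ref{proposition3} are in hand.
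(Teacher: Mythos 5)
Your proof is correct and follows essentially the same route as the paper's: both hinge on Proposition~\ref{proposition3} with $J=\bar{F}$ for the surjectivity claim in (b), and on descent of simplicity for well-definedness in (a), where the paper cites [2, Lemma~2.3] and (7) while you supply an explicit flatness/dimension argument together with the equivalence (8) and the second Remark. Factoring the well-definedness through the already-established map $\Gamma$ rather than re-invoking Theorem~\ref{theorem2} directly is a tidy organizational choice, but the underlying reasoning is the same.
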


\begin{proof}
Let $X \in \text{ITS}(\bar{F}G)$.
Then Theorem \ref{theorem2}, (7) and \cite[Lemma 2.3]{2} imply that $\Sigma$ is a well-defined map.
Also Proposition \ref{proposition3} with $J = \bar{F}$ yields (b) and (c) is immediate to conclude the proof.
\end{proof}

\begin{remark}
As mentioned above, we have completed the proofs of \cite[Theorems 3.1--3.2, 3.4--3.5 and Remarks 3.3 and 3.6]{2}.
In particular, we have obtained a new formula for $(\text{ITS}(\bar{F}G))$ as stated in the abstract.
\end{remark}

\bibliographystyle{abbrv}
\bibliography{equivalence.bib}
	
\end{document}